\newtheorem{theorem}{Theorem}[section]
\newtheorem{definition}[theorem]{Definition}
\newtheorem{proposition}[theorem]{Proposition}
\newtheorem{corollary}[theorem]{Corollary}
\newtheorem{lemma}[theorem]{Lemma}
\newtheorem{fact}[theorem]{Remark}
\newtheorem{exemplu}[theorem]{Example}
\newcommand{\bdfn}{\begin{definition}}
\newcommand{\edfn}{\end{definition}}
\newcommand{\bthm}{\begin{theorem}}
\newcommand{\ethm}{\end{theorem}}
\newcommand{\bprop}{\begin{proposition}}
\newcommand{\eprop}{\end{proposition}}
\newcommand{\bcor}{\begin{corollary}}
\newcommand{\ecor}{\end{corollary}}
\newcommand{\blem}{\begin{lemma}}
\newcommand{\elem}{\end{lemma}}
\newcommand{\bfact}{\begin{fact}}
\newcommand{\efact}{\end{fact}}
\newcommand{\bex}{\begin{exemplu}\begin{rm}}
\newcommand{\eex}{\end{rm}\end{exemplu}}
\def\R{{\mathbb R}}
\def\N{{\mathbb N}}
\newcommand{\eps}{\varepsilon}
\newcommand{\be}{\begin{enumerate}}
\newcommand{\ee}{\end{enumerate}}
\newcommand{\bt}{\begin{tabular}}
\newcommand{\et}{\end{tabular}}
\newcommand{\beq}{\begin{equation}}
\newcommand{\eeq}{\end{equation}}
\newcommand{\ba}{\begin{array}} 
\newcommand{\ea}{\end{array}}
\newcommand {\bea} {\begin{eqnarray}}
\newcommand {\eea} {\end {eqnarray}}
\newcommand {\bua} {\begin{eqnarray*}}
\newcommand {\eua} {\end {eqnarray*}}
\newcommand{\se}{\subseteq}
\newcommand{\ds}{\displaystyle}
\newcommand{\Lra}{\Leftrightarrow}
\begin{document}

\title{Rates of asymptotic regularity for Halpern iterations of nonexpansive mappings
\footnote{The research reported in this paper was carried out during 
the author's stay 
at the Max-Planck-Institute for Mathematics (Bonn) whose support is gratefully 
acknowledged.}}
\author{{\bfseries Lauren\c tiu Leu\c stean}\\[0.2cm]
Department of Mathematics, Technische Universit\" at Darmstadt,\\
 Schlossgartenstrasse 7, 64289 Darmstadt, Germany\\[0.1cm] and\\[0.1cm]
Institute of Mathematics "Simion Stoilow'' of the 
Romanian Academy, \\
Calea Grivi\c tei 21, P.O. Box 1-462, Bucharest, Romania\\[0.1cm]
E-mail: leustean@mathematik.tu-darmstadt.de
}

\maketitle

\begin{abstract}
In this paper we obtain new effective results on the Halpern iterations of nonexpansive mappings using methods from mathematical logic or, more specifically, proof-theoretic techniques. We give effective rates of asymptotic regularity for the Halpern iterations of nonexpansive  self-mappings of nonempty convex sets in normed spaces. The paper presents another case study in the project of {\em proof mining}, which is concerned with the extraction of effective uniform bounds from (prima-facie) ineffective proofs.
\end{abstract}

\section{Introduction}

This paper presents another case study in the project of {\em proof mining}, by which we mean the logical analysis of mathematical proofs with the aim of extracting new numerically relevant information hidden in the proofs.

General logical metatheorems were obtained (using proof-theoretic methods)  in \cite{Kohlenbach-metapaper} and \cite{Gerhardy/Kohlenbach} for various classes of spaces in functional analysis and metric geometry, such as metric, hyperbolic spaces in the sense of Reich/Kirk/ Kohlenbach, CAT(0), (uniformly convex) normed  and inner product spaces. Further examples ($\R$-trees, hyperbolic spaces in the sense of Gromov and uniformly convex hyperbolic spaces) are discussed in \cite{LL-WOLLIC-06}.
These metatheorems guarantee a priorly, under very general logical conditions, the extractability  of effective bounds from large classes of proofs in functional analysis, and moreover they provide algorithms for actually extracting the bounds. The bounds are uniform for all parameters meeting very weak local boundedness conditions. We refer to Kohlenbach's forthcoming book for details \cite{K-book}.

In this paper we apply proof mining to  metric fixed point theory, more specifically to the (approximate) fixed point theory of nonexpansive mappings, one of the most active branches of nonlinear functional analysis. We refer to \cite{Kirk-handbook} for an extensive account of metric fixed point theory.

In the following, $(X,\|\cdot\|)$ is a normed space and $C$ is a nonempty convex subset of $X$. A mapping $T:C\to C$ is called {\em nonexpansive} if for all $x,y\in C$,
\[\|Tx-Ty\|\leq \|x-y\|.\]

The usual Picard iterations are not the proper iterations for nonexpansive mappings and that's why other iterations were considered in this case.  The {\em Krasnoselski-Mann iteration} \cite{Mann(53),Krasnoselski(55),Groetsch(72)} star-ting with $x\in C$ is defined by: 
\begin{equation}
x_0:=x, \quad x_{n+1}:=(1-\lambda_n)x_n+\lambda_n Tx_n \quad \text{for~} n\geq 0, \label{KM-lambda-n-def-hyp}
\end{equation}
where $(\lambda_n)_{n\geq 0}$ is a sequence in $[0,1]$.

One of the most important notions in fixed point theory is the {\em asymptotic regularity}, defined in \cite{Browder/Petryshyn(66)}, but already implicit in \cite{Krasnoselski(55),Schaefer(57),Edelstein(70)}.  A mapping $T:C\to C$ is called {\em asymptotically regular}  if  for all $x\in C$,
\[\displaystyle\lim_{n\to\infty}\|T^n(x)-T^{n+1}(x)\|=0.\]
  
\noindent For constant $\lambda_n=\lambda\in[0,1]$, the asymptotic regularity of the averaged mapping 
$T_\lambda:=(1-\lambda)I+\lambda T$ is equivalent to the fact that $\displaystyle\lim_{n\to\infty} \|x_n-Tx_n\|=0$ for all $x\in C$. Therefore, for  general $(\lambda_n)$ in $[0,1]$, a nonexpansive mapping $T$ is {\em $\lambda_n$-asymptotically regular} \cite{Borwein/Reich/Shafrir(92)} if  for all $x\in C$, 
\beq
\lim_{n\to\infty}\|x_n-Tx_n\|=0. \label{def-as-reg}
\eeq

Methods of proof mining were applied in \cite{Kohlenbach(01),Kohlenbach(03),Kohlenbach/Leustean(03),L-07-JMAA} to obtain effective rates of as-ymptotic regularity for the Krasnoselski-Mann iterations of nonexpansive mappings in normed and CAT(0)-spaces or even in the more general class of (uniformly convex) hyperbolic spaces.

In this paper, we consider other iterations, introduced in  \cite{Halpern(67)}. For $x\in C$ and $(\lambda_n)_{n\ge 1}$ in $[0,1]$, the {\em Halpern iteration} starting with $x$ is defined as:
\beq
x_0:=x, \quad x_{n+1}:=\lambda_{n+1}x+(1-\lambda_{n+1})Tx_n \,\, \text{~for~} n\geq 0. \label{Halpern-iterate-x}
\eeq

\noindent As Wittmann remarked \cite{Wittmann-Hopf,Wittmann}, if $T$ is linear and $\ds\lambda_n:=\frac1{n+1}$, then $\ds x_n=\frac{1}{n+1}\ds\sum_{i=0}^nT^ix$, so the Halpern iterations could be regarded as nonlinear generalizations of the usual Cesaro averages. 

One of the earliest and most important results on the convergence of Halpern iterations is the following one.

\begin{theorem}\label{wittmann-thm}\cite[Theorem 2]{Wittmann}\\
Let $X$ be a Hilbert space, $C\se X$  a nonempty closed convex subset, $T:C\to C$ nonexpansive and $(\lambda_n)_{n\geq 1}$ be a sequence in $\in[0,1]$ satisfying the following conditions: $\ds \lim_{n\to\infty} \lambda_n =0$, $\ds\sum_{n=1}^\infty \lambda_n$ is divergent and $\ds \sum_{n=1}^\infty|\lambda_{n+1}-\lambda_n|$ is convergent. Assume moreover that the set $Fix(T)$ of fixed points of $T$ is nonempty .

Then for any $x\in C$, the Halpern iteration $(x_n)_{n\geq 1}$ is norm convergent to the unique fixed point $Px$ of $T$ with $\|x-Px\|\leq \|x-y\|$ for any $y\in Fix(T)$ .
\end{theorem}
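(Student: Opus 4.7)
My plan is to follow the standard four-step structure for strong-convergence proofs of Halpern-type iterations in Hilbert space: (i) show that $(x_n)$ is bounded; (ii) establish asymptotic regularity, in the forms $\|x_{n+1}-x_n\|\to 0$ and $\|x_n-Tx_n\|\to 0$; (iii) identify every weak cluster point of $(x_n)$ as a fixed point of $T$; and (iv) close a quadratic recursion for $\|x_n-Px\|^2$ via a Hilbert-space inequality. Since $Fix(T)$ is nonempty by assumption, closed (by continuity of $T$) and convex (a standard consequence of nonexpansiveness in strictly convex spaces), the metric projection $Px$ is well defined.

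For (i), a direct induction using nonexpansiveness shows that for any $p\in Fix(T)$ and all $n\geq 0$, $\|x_n-p\|\leq \|x-p\|$; in particular $(x_n)$ and $(Tx_n)$ are bounded. For (ii), I would rewrite
\[x_{n+1}-x_n = (\lambda_{n+1}-\lambda_n)(x-Tx_{n-1}) + (1-\lambda_{n+1})(Tx_n-Tx_{n-1})\]
and use nonexpansiveness of $T$ to obtain $\|x_{n+1}-x_n\|\leq (1-\lambda_{n+1})\|x_n-x_{n-1}\| + M|\lambda_{n+1}-\lambda_n|$ for some constant $M$ bounding $\|x-Tx_{n-1}\|$ uniformly. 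A standard real-sequences lemma, using $\sum\lambda_n=\infty$ together with $\sum|\lambda_{n+1}-\lambda_n|<\infty$, then yields $\|x_{n+1}-x_n\|\to 0$. Since $x_{n+1}-Tx_n=\lambda_{n+1}(x-Tx_n)\to 0$ (as $\lambda_n\to 0$ and $(Tx_n)$ is bounded), the triangle inequality gives $\|x_n-Tx_n\|\to 0$.

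For (iii), Browder's demiclosedness of $I-T$ at $0$ in Hilbert space, combined with (ii), guarantees that every weak cluster point of the bounded sequence $(x_n)$ lies in $Fix(T)$. Using the variational characterization $\langle x-Px, y-Px\rangle\leq 0$ for all $y\in Fix(T)$ of the metric projection, this yields
\[\limsup_{n\to\infty}\langle x-Px, x_n-Px\rangle\leq 0.\]
For (iv), applying the Hilbert inequality $\|a+b\|^2\leq \|a\|^2+2\langle b,a+b\rangle$ to the decomposition $x_{n+1}-Px=(1-\lambda_{n+1})(Tx_n-Px)+\lambda_{n+1}(x-Px)$ and using nonexpansiveness gives
\[\|x_{n+1}-Px\|^2 \leq (1-\lambda_{n+1})^2\|x_n-Px\|^2 + 2\lambda_{n+1}\langle x-Px, x_{n+1}-Px\rangle.\]
Since $(1-\lambda_{n+1})^2\leq 1-\lambda_{n+1}$ for $\lambda_{n+1}\in[0,1]$, a further standard lemma on real sequences (using $\lambda_n\to 0$, $\sum\lambda_n=\infty$, and the $\limsup$ bound above) then forces $\|x_n-Px\|\to 0$.

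The main obstacle is step (iii): this is the only point at which the Hilbert-space geometry is genuinely used, and it rests on the ineffective combination of weak sequential compactness of bounded sets with demiclosedness of $I-T$ at $0$. All the remaining steps are essentially metric and rely only on nonexpansiveness and the summability/divergence hypotheses on $(\lambda_n)$. From the proof-mining perspective of the present paper, (iii) is also the step whose quantitative analysis is by far the most delicate, since the use of weak compactness has no direct effective counterpart and must be replaced by a suitable metastable formulation.
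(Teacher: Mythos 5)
Your proof is correct, but note that the paper itself gives no proof of this statement: it is quoted verbatim from Wittmann and serves only as motivation, with the paper's actual work confined to a quantitative analysis of the asymptotic-regularity portion. Your steps (i)--(ii) coincide exactly with that portion: the decomposition $x_{n+1}-x_n=(\lambda_{n+1}-\lambda_n)(x-Tx_{n-1})+(1-\lambda_{n+1})(Tx_n-Tx_{n-1})$ is Lemma \ref{lemma-Halpern}, the resulting recursive inequality is (\ref{ineq-1}), and the ``standard real-sequences lemma'' you invoke is precisely Liu's Lemma 2, whose quantitative version is Lemma \ref{quant-liu}; the only difference is that the paper obtains boundedness of $(x_n)$ by hypothesis (so as to avoid assuming $Fix(T)\neq\emptyset$), whereas you derive it from the Fej\'er-type estimate $\|x_n-p\|\leq\|x-p\|$, which is the right move when a fixed point is available. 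Steps (iii)--(iv) are the now-standard completion (demiclosedness of $I-T$ at $0$ plus the variational characterization of the projection to get $\limsup_n\langle x-Px,x_n-Px\rangle\leq 0$, then the quadratic recursion closed by Xu's lemma); this is a streamlined, widely used variant rather than Wittmann's original argument, which reaches the $\limsup$ inequality by a more hands-on route, but it is fully valid. Your closing observation is also exactly on target: the weak-compactness/demiclosedness step is the genuinely ineffective one, which is why the paper extracts rates only for $\|x_n-Tx_n\|\to 0$ and treats the full convergence statement as beyond the scope of a direct rate extraction.
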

Generalizations of this theorem to the Banach space case and different conditions on $(\lambda_n)$ were considered in numerous papers. We refer to \cite{Xu-07-JMAA-online} for a nice exposition.

In the following, we consider the important problem of asymptotic regularity, this time associated to the Halpern iterations: $\ds \lim_{n\to\infty} \|x_n-Tx_n\|=0$, where $(x_n)_{n\geq 1}$ is defined by (\ref{Halpern-iterate-x}). By inspecting the proof of Theorem \ref{wittmann-thm} (and its generalizations),  it is easy to see that the first step is to obtain asymptotic regularity, and that this can be done in  a much more general setting.  

Thus, the following theorem, essentially contained in \cite{Wittmann,Xu-02-JLMS,Xu-viscosity}, can be proved.

\begin{theorem}\label{Halpern-ass-reg}
Let $(X,\|\cdot\|)$ be a normed space, $C\se X$  a nonempty convex subset and  $T:C\to C$ be nonexpansive. 

Assume that $(\lambda_n)_{n\geq 1}$ is a sequence in $[0,1]$  such that $\ds\lim_{n\to\infty} \lambda_n =0$, $\ds\sum_{n=1}^\infty \lambda_n$ is divergent and $\ds\sum_{n=1}^\infty|\lambda_{n+1}-\lambda_n|$ is convergent.

Let $x\in C$ be such that $(x_n)$ is bounded. 

Then 
\[\lim_{n\to\infty} \|x_n-Tx_n\|=0.\]
\end{theorem}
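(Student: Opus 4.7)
The plan is to estimate the two quantities $\|x_{n+1}-x_n\|$ and $\|x_{n+1}-Tx_n\|$ separately and combine them by the triangle inequality
\[\|x_n-Tx_n\|\le \|x_n-x_{n+1}\|+\|x_{n+1}-Tx_n\|.\]
Since $(x_n)$ is bounded and $T$ is nonexpansive, so is $(Tx_n)$; fix $M>0$ with $\|x-Tx_n\|\le M$ for all $n$.

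For the second summand, directly from the defining recursion one gets
\[x_{n+1}-Tx_n=\lambda_{n+1}(x-Tx_n),\]
hence $\|x_{n+1}-Tx_n\|\le M\lambda_{n+1}\to 0$ by the assumption $\lambda_n\to 0$. This step is essentially free.

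For the first summand, I would subtract the recursions at $n$ and $n-1$ and group the terms so that the coefficient of $Tx_n-Tx_{n-1}$ is $(1-\lambda_{n+1})$ and the remaining part is a multiple of $(\lambda_{n+1}-\lambda_n)$. Using nonexpansivity this yields
\[\|x_{n+1}-x_n\|\le(1-\lambda_{n+1})\|x_n-x_{n-1}\|+M|\lambda_{n+1}-\lambda_n|.\]
From here the conclusion $\|x_{n+1}-x_n\|\to 0$ follows from a standard ``Xu-type'' lemma on sequences of nonnegative reals: if $a_{n+1}\le(1-\alpha_n)a_n+\gamma_n$ with $\alpha_n\in[0,1]$, $\sum\alpha_n=\infty$, and $\sum\gamma_n<\infty$, then $a_n\to 0$. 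Both hypotheses are guaranteed by the divergence of $\sum\lambda_n$ and convergence of $\sum|\lambda_{n+1}-\lambda_n|$.

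The main obstacle is really this last invocation: one must verify that the recursive inequality fits the hypotheses of such a quantitative lemma, because the summable error term is not itself of the form $\alpha_n\beta_n$ with $\beta_n\to 0$. In view of the proof-mining orientation of the paper, I expect the author to replace the limit argument by an explicit, quantitative version of this lemma so that the rate of convergence of $\|x_n-Tx_n\|$ to $0$ can be read off in terms of a rate of divergence for $\sum\lambda_n$, a Cauchy rate for $\sum|\lambda_{n+1}-\lambda_n|$, a rate of convergence for $\lambda_n$, and the bound $M$ on the iteration.
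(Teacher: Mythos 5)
Your proposal matches the paper's argument essentially step for step: the identity $x_{n+1}-Tx_n=\lambda_{n+1}(x-Tx_n)$, the recursive inequality $\|x_{n+1}-x_n\|\le(1-\lambda_{n+1})\|x_n-x_{n-1}\|+O(1)\,|\lambda_{n+1}-\lambda_n|$ obtained by subtracting consecutive recursions, and the appeal to the Liu/Xu-type lemma (of which the paper proves exactly the quantitative version you anticipate, as Lemma \ref{quant-liu}) are precisely the ingredients of the paper's proof of the quantitative Theorem \ref{main-thm}, from which Theorem \ref{Halpern-ass-reg} follows. The proposal is correct, up to immaterial differences in the choice of the bounding constant $M$.
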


This theorem is our point of departure. By a logical analysis of its proof, we shall obtain a quantitative version (Theorem \ref{main-thm}), providing for the first time effective rates of asymptotic regularity for the Halpern iterates, that is rates of convergence of $\big(\|x_n-Tx_n\|\big)$ towards $0$.

\section{Main results}\label{main-results}

Before stating our main theorem, let us recall some terminology.

Let $(a_n)_{n\geq 1}$ be a sequence of real numbers. If the series $\ds \sum_{n=1}^\infty a_n$ is divergent, then a function $\gamma:\N^*\to\N^*$ is called a {\em rate of divergence} of $\ds \sum_{n=1}^\infty a_n$ if 
\beq
\forall n\in\N^*\left(\sum_{i=1}^{\gamma(n)}a_i \geq n\right).\label{def-rate-div}
\eeq
If $(a_n)_{n\geq 1}$ is convergent, then a function $\gamma:(0,\infty)\to\N^*$ is called a {\em Cauchy modulus} of $(a_n)$ if 
\beq
\forall \eps>0\,\forall n\in\N^*\left(a_{\gamma(\eps)+n}-a_{\gamma(\eps)} < \eps\right)\label{def-mod-Cauchy}.
\eeq
If $\ds\lim _{n\to\infty}a_n=a$, then a function $\gamma:(0,\infty)\to\N^*$ is called a {\em rate of convergence} of $(a_n)$ if 
\beq
\forall \eps>0\,\forall n\geq \gamma(\eps)\left(|a_n-a| < \eps\right)\label{def-rate-conv}.
\eeq

The following quantitative version of Theorem \ref{Halpern-ass-reg} is the main result of our paper.

\begin{theorem}\label{main-thm}
Let $(X,\|\cdot\|)$ be a normed space, $C\se X$  a nonempty convex subset and  $T:C\to C$ be nonexpansive. \\
Assume that $(\lambda_n)_{n\geq 1}$ is a sequence in $[0,1]$  such that $\ds\lim_{n\to\infty} \lambda_n =0$, $\ds\sum_{n=1}^\infty \lambda_n$ is divergent and $\ds\sum_{n=1}^\infty|\lambda_{n+1}-\lambda_n|$ is convergent.
Moreover, let $\alpha:(0,\infty)\to\N^*$ be a rate of convergence of $(\lambda_n)$, $\beta:(0,\infty)\to\N^*$ be a Cauchy modulus of $s_n:=\ds\sum_{i=1}^n|\lambda_{i+1}-\lambda_i|$ and $\theta:\N^*\to\N^*$ be a rate of divergence of $\ds\sum_{n=1}^\infty \lambda_n$.\\
Let $x\in C$ be such that $(x_n)$ is bounded. 

Then  $\ds\lim_{n\to\infty} \|x_n-Tx_n\|=0$ and moreover 
\bua
 \forall \eps\in(0,2) \forall n\ge \Phi(\alpha,\beta,\theta,M,\eps)\ \big( \|x_n-Tx_n\|< \eps\big), \label{conclusion-main-thm}
\eua
where 
\bua
\begin{array}{l}
\ds\Phi(\alpha,\beta,\theta,M,\eps)=\max\left\{\theta\left(\beta\left(\frac\eps {8M}\right)+1+\left\lceil\ln\left(\frac{8M}\eps\right)\right\rceil\right),\,\,\alpha\left(\frac\eps {4M}\right)\right\},\\[0.2cm]\label{def-Phi-main-thm}
M\in\N^* \text{~is such that~} M\geq \|x_n\|+\|x\|+\|Tx\| \text{~~for all~~} n\geq 1.
\end{array}
\eua
\end{theorem}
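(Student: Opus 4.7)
The plan is to control $\|x_n-Tx_n\|$ by reducing it to two simpler quantities: $\lambda_n$ (governed directly by the rate $\alpha$) and $a_n:=\|x_n-x_{n-1}\|$ (governed by a linear recursion). Subtracting the defining identities for $x_{n+1}$ and $x_n$ one obtains
\[
x_{n+1}-x_n=(\lambda_{n+1}-\lambda_n)(x-Tx_{n-1})+(1-\lambda_{n+1})(Tx_n-Tx_{n-1}),
\]
and the crude bound $\|x-Tx_{n-1}\|\le 2M$, which follows routinely from the hypothesis on $M$ together with the nonexpansiveness of $T$, gives the one-step recursion
\[
a_{n+1}\le(1-\lambda_{n+1})a_n+2M|\lambda_{n+1}-\lambda_n|.
\]
At the same time, the identity $x_n-Tx_{n-1}=\lambda_n(x-Tx_{n-1})$ combined with nonexpansiveness yields $\|x_n-Tx_n\|\le 2M\lambda_n+a_n$, so it will be enough to force $2M\lambda_n$ and $a_n$ each below $\eps/2$.

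Iterating the recursion from an index $m\ge 1$ up to $n$ and using $a_m\le 2M$ (triangle inequality plus $\|x_k\|\le M$), the classical inequality $\prod_{k=m+1}^n(1-\lambda_k)\le\exp(-\sum_{k=m+1}^n\lambda_k)$, and $\prod(1-\lambda_k)\le 1$ on the subproducts attached to the second sum, yields
\[
a_n\le 2M\exp\Bigl(-\sum_{k=m+1}^n\lambda_k\Bigr)+2M(s_{n-1}-s_{m-1}),
\]
where the reindexing $\sum_{k=m+1}^n|\lambda_k-\lambda_{k-1}|=s_{n-1}-s_{m-1}$ is responsible for the ``$+1$'' that will appear in the next step.

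To finish, take $m:=\beta(\eps/(8M))+1$; by the Cauchy modulus property of $(s_n)$ the second term is then at most $2M\cdot\eps/(8M)=\eps/4$ for every $n\ge m$. For the exponential term, note that $\sum_{k=1}^m\lambda_k\le m$ since each $\lambda_k\in[0,1]$, so whenever $n\ge\theta(N)$ one has $\sum_{k=m+1}^n\lambda_k\ge N-m$; choosing $N:=m+\lceil\ln(8M/\eps)\rceil$ forces $2M\exp(-\sum)\le\eps/4$, hence $a_n<\eps/2$. Combined with $2M\lambda_n<\eps/2$, valid once $n\ge\alpha(\eps/(4M))$, this gives $\|x_n-Tx_n\|<\eps$ at the threshold $\Phi(\alpha,\beta,\theta,M,\eps)$ exactly as stated. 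The proof is essentially a quantitative reading of the classical argument; the main obstacle is really the careful bookkeeping of constants and index shifts so that the three separate contributions to the error end up with matching denominators in the final estimate.
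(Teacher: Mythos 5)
Your proof is correct and follows essentially the same route as the paper: the same recursion $a_{n+1}\le(1-\lambda_{n+1})a_n+2M|\lambda_{n+1}-\lambda_n|$, the same $\exp(-\sum\lambda_k)$ estimate on the iterated product, and the same split of the error into the $\beta$-, $\theta$-, and $\alpha$-controlled pieces, yielding the identical bound $\Phi$. The only cosmetic difference is that you inline the quantitative Liu-type lemma (the paper's Lemma 3.3) instead of invoking it separately, and you bound $\|x_n-Tx_n\|$ by $a_n+2M\lambda_n$ rather than the paper's index-shifted $\|x_{n-1}-Tx_{n-1}\|\le a_n+2M\lambda_n$.
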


\noindent We shall give the proof of the above theorem in the last section of our paper. We derive now some further consequences. 

\begin{corollary}\label{C-bounded}
Let $(X,\|\cdot\|)$ be a normed space, $C\se X$  a nonempty convex bounded subset with finite diameter $d_C$ and  $T:C\to C$ be nonexpansive. \\
Assume that $(\lambda_n)_{n\geq 1}$ satisfies the hypotheses of Theorem \ref{main-thm}.

Then  $\ds\lim_{n\to\infty} \|x_n-Tx_n\|=0$ for all $x\in C$ and moreover 
\bua
 \forall \eps\in(0,2) \forall n\ge \Phi(\alpha,\beta,\theta,d_C,\eps)\ \big( \|x_n-Tx_n\|< \eps\big), 
\eua
where 
\bua
\begin{array}{l}
\ds\Phi(\alpha,\beta,\theta,d_C,\eps)=\max\left\{\theta\left(\beta\left(\frac\eps {8M}\right)+1+\left\lceil\ln\left(\frac{8M}\eps\right)\right\rceil\right),\,\,\alpha\left(\frac\eps {4M}\right)\right\},\\[0.2cm]\label{def-Phi-bounded}
M\in\N^* \text{~is such that~} M\geq 3d_C.
\end{array}
\eua
\end{corollary}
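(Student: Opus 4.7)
The plan is to reduce Corollary \ref{C-bounded} to a direct application of Theorem \ref{main-thm} by exploiting the translation invariance of both the Halpern iteration and the quantity $\|x_n-Tx_n\|$. Given $x\in C$, I would shift the whole situation by $-x$, passing to the convex set $\widetilde C:=C-x$ (which is convex with the same diameter $d_C$ and contains $0$), the starting point $\widetilde x:=0\in\widetilde C$, and the mapping $\widetilde T:\widetilde C\to\widetilde C$ defined by $\widetilde T(y):=T(y+x)-x$. Clearly $\widetilde T$ is nonexpansive, and a straightforward induction shows that the Halpern iteration $(\widetilde x_n)$ associated with $(\widetilde T,\widetilde x,(\lambda_n))$ is exactly $\widetilde x_n=x_n-x$.

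The purpose of this translation is that in the shifted picture every norm entering the bound of Theorem \ref{main-thm} can be controlled purely in terms of the diameter $d_C$: one has $\|\widetilde x\|=0$, $\|\widetilde T\widetilde x\|=\|Tx-x\|\le d_C$ because $x,Tx\in C$, and $\|\widetilde x_n\|=\|x_n-x\|\le d_C$ because $x_n,x\in C$. Thus $\|\widetilde x_n\|+\|\widetilde x\|+\|\widetilde T\widetilde x\|\le 2d_C$, and any $M\in\N^*$ with $M\ge 3d_C$ (in particular $M:=\lceil 3d_C\rceil$) is an admissible uniform bound in the sense required by Theorem \ref{main-thm}.

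Applying Theorem \ref{main-thm} to $(\widetilde T,\widetilde x,(\lambda_n))$ with this $M$ and the given data $\alpha,\beta,\theta$ then yields, for every $\eps\in(0,2)$ and every $n\ge\Phi(\alpha,\beta,\theta,M,\eps)$, the estimate $\|\widetilde x_n-\widetilde T\widetilde x_n\|<\eps$. Since
\[
\|\widetilde x_n-\widetilde T\widetilde x_n\|=\|(x_n-x)-(Tx_n-x)\|=\|x_n-Tx_n\|,
\]
this is precisely the asymptotic regularity estimate claimed in the corollary, with the bound $\Phi(\alpha,\beta,\theta,d_C,\eps)$ as stated.

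There is no genuine obstacle in this argument: once translation invariance is noticed, the reduction is automatic. The only mildly delicate point is bookkeeping the bound $M$ as a natural number dominating the three norms simultaneously; the extra slack between the $2d_C$ actually produced by the translation and the $3d_C$ appearing in the corollary simply absorbs the ceiling and any rounding, keeping the statement uniform in $x\in C$.
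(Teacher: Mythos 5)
Your proof is correct, but it takes a genuinely different route from the paper's, and in one respect a more careful one. The paper's proof is a one-line reduction: it sets $d_C:=\sup\{\|y\|\mid y\in C\}$, observes that then $(x_n)$ is automatically bounded and $\|x_n\|+\|x\|+\|Tx\|\le 3d_C$ for every $x\in C$, and applies Theorem \ref{main-thm} verbatim. Note that the quantity the paper actually uses is the supremum of the norms of points of $C$, not the diameter $\sup\{\|u-v\|\mid u,v\in C\}$; with the honest diameter the direct estimate $\|x\|\le d_C$ fails (a singleton far from the origin has diameter $0$), and this is exactly the gap your translation closes. By passing to $\widetilde C=C-x$, $\widetilde T(y)=T(y+x)-x$, $\widetilde x=0$ and checking $\widetilde x_n=x_n-x$, you bound all three relevant norms by the genuine diameter ($0$, $\|Tx-x\|\le d_C$, $\|x_n-x\|\le d_C$), so any $M\ge 3d_C$ (indeed $2d_C$) is admissible, and translation invariance of $\|x_n-Tx_n\|$ gives the stated conclusion with the same $\Phi$. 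In short, your argument proves the corollary as literally stated with the standard notion of diameter, at the cost of the routine verification that the Halpern scheme is translation-equivariant; the paper's argument is shorter but tacitly reads $d_C$ as the norm-radius of $C$ about the origin.
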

\begin{proof}
Since $C$ is bounded, it has a finite diameter $d_C:=\sup\{\|x\|\mid x\in C\}$.  Moreover, for all $x\in C$, $(x_n)$ is bounded and
$\|x_n\|+\|x\|+\|Tx\|\leq 3d_C$. Apply  now Theorem \ref{main-thm}. $\hfill\qed$\end{proof}

\noindent Thus, for bounded $C$, we get asymptotic regularity for general $(\lambda_n)$ and an explicit  rate of asymptotic regularity $\Phi(\alpha,\beta,\theta,d_C,\eps)$  which depends only on the error $\varepsilon$, on the diameter $d_C$ of $C$, and on $(\lambda_n)$  via $\alpha,\beta,\theta$, but not  on the nonexpansive mapping $T$, the starting point $x\in C$ of the Halpern iteration or other data related with $C$ and $X$.

\begin{corollary}\label{lambda-decreasing}
Let $(X,\|\cdot\|)$ be a normed space, $C\se X$ be a nonempty convex subset and  $T:C\to C$ nonexpansive. \\
Assume that $(\lambda_n)_{n\geq 1}$ is a decreasing sequence in $[0,1]$  such that $\ds\lim_{n\to\infty} \lambda_n =0$, $\ds\sum_{n=1}^\infty \lambda_n$ is divergent and let $\alpha:(0,\infty)\to\N^*$ be a rate of convergence of $(\lambda_n)$ and $\theta:\N^*\to\N^*$ be a rate of divergence of $\ds\sum_{n=1}^\infty \lambda_n$.\\
Let $x\in C$ be such that $(x_n)$ is bounded.

Then  $\lim \|x_n-Tx_n\|=0$ and moreover 
\bua
 \forall \eps\in(0,2)\forall n\ge \Psi(\alpha,\theta,M,\eps)\ \big( \|x_n-Tx_n\|< \eps\big), 
\eua
where 
\bua
\begin{array}{l}
\ds\Psi(\alpha,\theta,M,\eps)=\max\left\{\theta\left(\alpha\left(\frac\eps {8M}\right)+1+\left\lceil\ln\left(\frac{8M}\eps\right)\right\rceil\right),\,\,\alpha\left(\frac\eps {4M}\right)\right\}, \\[0.2cm]
M\in\N^* \text{~is such that~} M\geq \|x_n\|+\|x\|+\|Tx\| \text{~~for all~~} n\geq 1.
\end{array}\label{def-Phi-lambda-decreasing}
\eua
\end{corollary}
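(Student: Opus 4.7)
The plan is to reduce Corollary \ref{lambda-decreasing} to Theorem \ref{main-thm} by exploiting the fact that monotonicity of $(\lambda_n)$ turns the series $\sum |\lambda_{n+1}-\lambda_n|$ into a telescoping sum, which lets us build a Cauchy modulus $\beta$ directly out of the rate of convergence $\alpha$.

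First I would observe that since $(\lambda_n)$ is decreasing, we have $|\lambda_{n+1}-\lambda_n|=\lambda_n-\lambda_{n+1}$, so the partial sums telescope:
\[
s_n=\sum_{i=1}^n|\lambda_{i+1}-\lambda_i|=\sum_{i=1}^n(\lambda_i-\lambda_{i+1})=\lambda_1-\lambda_{n+1}.
\]
Since $\lambda_n\to 0$, this shows that $s_n\to\lambda_1$, so in particular $\sum_{n=1}^\infty|\lambda_{n+1}-\lambda_n|$ is convergent and the missing hypothesis of Theorem \ref{main-thm} is verified.

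Next I would construct a Cauchy modulus for $(s_n)$. For any $\varepsilon>0$ and any $n\in\N^*$, the telescoping identity yields
\[
s_{\alpha(\varepsilon)+n}-s_{\alpha(\varepsilon)}=\lambda_{\alpha(\varepsilon)+1}-\lambda_{\alpha(\varepsilon)+n+1}\le \lambda_{\alpha(\varepsilon)+1}.
\]
Since $\alpha$ is a rate of convergence of $(\lambda_n)$ to $0$ and $\alpha(\varepsilon)+1\ge\alpha(\varepsilon)$, we get $\lambda_{\alpha(\varepsilon)+1}<\varepsilon$. Hence $\beta:=\alpha$ is a Cauchy modulus of $(s_n)$ in the sense of (\ref{def-mod-Cauchy}).

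Finally I would invoke Theorem \ref{main-thm} with this particular $\beta$. Under the substitution $\beta=\alpha$, the expression $\Phi(\alpha,\beta,\theta,M,\varepsilon)$ collapses exactly to
\[
\max\left\{\theta\!\left(\alpha\!\left(\tfrac{\varepsilon}{8M}\right)+1+\left\lceil\ln\!\left(\tfrac{8M}{\varepsilon}\right)\right\rceil\right),\ \alpha\!\left(\tfrac{\varepsilon}{4M}\right)\right\}=\Psi(\alpha,\theta,M,\varepsilon),
\]
which is exactly the desired bound. There is no serious obstacle; the only thing to be careful about is checking that $\beta(\varepsilon):=\alpha(\varepsilon)$ indeed takes values in $\N^*$ and that the strict inequality in the definition of Cauchy modulus matches the strict inequality coming from the rate of convergence of $(\lambda_n)$, both of which are immediate from the hypotheses.
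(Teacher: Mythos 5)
Your proposal is correct and follows essentially the same route as the paper: telescoping the sum to get $s_n=\lambda_1-\lambda_{n+1}$, showing $\beta:=\alpha$ is a Cauchy modulus of $(s_n)$, and then substituting into Theorem \ref{main-thm}. The only cosmetic difference is that you justify $\lambda_{\alpha(\eps)+1}<\eps$ directly from the definition of a rate of convergence, while the paper routes it through $\lambda_{\alpha(\eps)+1}\leq\lambda_{\alpha(\eps)}<\eps$; both are fine.
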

\begin{proof}
Remark that $(\lambda_n)$ decreasing implies that
\bua
s_n:=\sum_{i=1}^n|\lambda_{i+1}-\lambda_i|=\sum_{i=1}^n(\lambda_i-\lambda_{i+1})=\lambda_1-\lambda_{n+1}.
\eua
Since $\ds\lim_{n\to\infty} \lambda_n=0$, it follows that $\ds\sum_{n=1}^\infty|\lambda_{n+1}-\lambda_n|=\lambda_1$, that is it is convergent. Moreover, for all $\eps>0, n\in\N^*$,
\bua
s_{\alpha(\eps)+n}-s_{\alpha(\eps)}&= &(\lambda_1-\lambda_{\alpha(\eps)+n+1})-(\lambda_1-\lambda_{\alpha(\eps)+1})=\lambda_{\alpha(\eps)+1}-\lambda_{\alpha(\eps)+n+1}\\
&&\leq \lambda_{\alpha(\eps)+1}\leq \lambda_{\alpha(\eps)}< \eps,
\eua
since $\alpha$ is a rate of convergence of $(\lambda_n)$. Thus, $\alpha$ is a Cauchy modulus of $(s_n)$, so we can apply now Theorem \ref{main-thm} with $\beta:=\alpha$. $\hfill\qed$
\end{proof}

The rate of asymptotic regularity can be further simplified for $\lambda_n=1/n$.

\begin{corollary}
Let $(X,\|\cdot\|)$ be a normed space, $C\se X$  a nonempty convex bounded subset with finite diameter $d_C$ and  $T:C\to C$ be nonexpansive. \\[0.1cm]
Assume that $\ds\lambda_n=\frac 1n$ for all $n\geq 1$.

Then  $\ds\lim_{n\to\infty} \|x_n-Tx_n\|=0$ for all $x\in C$ and moreover 
\bua
 \forall \eps\in(0,2)\forall n\ge \Phi(d_C,\eps)\ \big( \|x_n-Tx_n\|< \eps\big), 
\eua
where 
\bua
\begin{array}{l}
\ds\Phi(d_C,\eps)=\exp\left(\ln 4\cdot\left(\frac{16M}\eps+3\right)\right),\\[0.2cm]
M\in\N^* \text{~is such that~} M\geq 3d_C.
\end{array}
\eua
\end{corollary}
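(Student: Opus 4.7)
The plan is to specialize Corollary \ref{lambda-decreasing} to $\lambda_n=1/n$, supplying concrete rates and arithmetically simplifying the resulting bound. The standing hypotheses are immediate: $(1/n)$ is decreasing in $[0,1]$, tends to $0$, and the harmonic series diverges. Since $C$ is bounded, $(x_n)\se C$ is automatically bounded, so, as in the proof of Corollary \ref{C-bounded}, any $M\in\N^*$ with $M\ge 3d_C$ serves as the required norm bound.

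The first step is to choose concrete rates. A convenient rate of convergence of $(1/n)$ to $0$ is
\[
\alpha(\eps):=\lfloor 1/\eps\rfloor+1,
\]
which satisfies $\alpha(\eps)\le 1/\eps+1$. For the rate of divergence I would use the classical dyadic lower bound $H_{2^k}\ge 1+k/2$, obtained by grouping the harmonic terms into blocks $[2^{j-1}+1,2^j]$, each contributing at least $1/2$. Applied with $k=2n$ this gives $H_{4^n}\ge 1+n\ge n$, so
\[
\theta(n):=4^n
\]
is a rate of divergence of $\sum 1/n$.

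The remaining step is arithmetic. Plugging into $\Psi(\alpha,\theta,M,\eps)$ from Corollary \ref{lambda-decreasing}, the argument of $\theta$ satisfies
\[
\alpha\!\left(\frac{\eps}{8M}\right)+1+\left\lceil\ln\!\left(\frac{8M}{\eps}\right)\right\rceil \le \left(\frac{8M}{\eps}+1\right)+1+\left(\frac{8M}{\eps}+1\right) = \frac{16M}{\eps}+3,
\]
using $\ln t\le t$ for $t\ge 1$ (which applies since $8M/\eps>1$, because $M\ge 1$ and $\eps<2$). Hence $\theta$ of this quantity is at most $4^{16M/\eps+3}=\exp(\ln 4\cdot(16M/\eps+3))$, which easily dominates the other candidate $\alpha(\eps/(4M))\le 4M/\eps+1$ in the $\max$, yielding the stated $\Phi(d_C,\eps)$.

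The proof is essentially mechanical once Corollary \ref{lambda-decreasing} is available; the only substantive choice is to take the base-$4$ rate of divergence coming from the dyadic estimate rather than the weaker $\lceil e^n\rceil$ one would derive from $H_N\ge\ln(N+1)$, since the latter would not produce the stated constant $\ln 4$ in the exponent.
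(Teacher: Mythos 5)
Your proposal is correct and follows essentially the same route as the paper: specialize Corollary \ref{lambda-decreasing} (together with Corollary \ref{C-bounded}) with $\alpha(\eps)\approx 1/\eps+1$ and $\theta(n)=4^n$, then bound the argument of $\theta$ by $16M/\eps+3$ via $\lceil a\rceil<a+1$ and $\ln a\le a$. Your explicit dyadic justification of $H_{4^n}\ge n$ is a small bonus the paper omits, and the remaining discrepancies (floor vs.\ ceiling in $\alpha$, how the $\max$ is discharged) are immaterial.
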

\begin{proof}
Obviously, $\ds\lim_{n\to\infty}\frac1n=0$ with a rate of convergence 
\[\alpha:(0,\infty)\to\N^*, \quad \alpha(\eps)= \left\lceil\frac{1}\eps\right\rceil+1.\]
Moreover, $\ds\sum_{n=1}^\infty\frac 1n$ is divergent with a rate of divergence given by  
\[\theta:\N^*\to\N^*, \quad \theta(n)=4^{n}.\]
Since, furthermore, $\ds\left(\frac1n\right)$ is decreasing, we can apply Corollaries \ref{lambda-decreasing} and \ref{C-bounded} to get that $\lim \|x_n-Tx_n\|=0$ for all $x\in C$ and moreover 
\[\forall \eps\in(0,2)\forall n\ge \Psi(\alpha,\theta,M,\eps)\ \big( \|x_n-Tx_n\|< \eps\big),\]
where 
\bua
\Psi(\alpha,\theta,M,\eps)&=& \max\left\{\theta\left(\alpha\left(\frac\eps {8M}\right)+1+\left\lceil\ln\left(\frac{8M}\eps\right)\right\rceil\right),\,\,\alpha\left(\frac\eps {4M}\right)\right\}\\
&=& \theta\left(\alpha\left(\frac\eps {8M}\right)+1+\left\lceil\ln\left(\frac{8M}\eps\right)\right\rceil\right)
\eua
and $M\in\N^*$ is such that $M\geq 3d_C$.
Using that $\lceil a\rceil < a+1$ and $1+\ln a\leq a$ for all $a>0$, we get that
\bua
\alpha\left(\frac\eps {8M}\right)+1+\left\lceil\ln\left(\frac{8M}\eps\right)\right\rceil &<& \alpha\left(\frac\eps {8M}\right)+2+\ln\left(\frac{8M}\eps\right)\leq\alpha\left(\frac\eps {8M}\right)+1+\frac{8M}\eps\\
&=& \left\lceil\frac{8M}\eps\right\rceil+2+\frac{8M}\eps
< \frac{16M}\eps+3,
\eua
we get that
\bua
\Psi(\alpha,\theta,M,\eps)&<& 4^{\frac{16M}\eps+3}=\exp\left(\ln 4\cdot\left(\frac{16M}\eps+3\right)\right)=\Phi(d_c,\eps).
\eua
The conclusion follows now immediately. $\hfill\Box$
\end{proof}

\noindent Hence, we get an exponential (in $1/\eps$) rate of asymptotic regularity in the case $\lambda_n=1/n$.

\section{Some technical lemmas}

The following lemma collects some useful properties of Halpern iterations and it is essentially contained in \cite{Xu-02-JLMS,Xu-viscosity}. In order to make the paper self-contained, we still give the proof.

\begin{lemma}\label{lemma-Halpern}
Let $(X,\|\cdot\|)$ be a normed space, $C\se X$ be a nonempty convex subset,  $T:C\to C$ nonexpansive and $(\lambda_n)_{n\geq 1}$ be a sequence in $[0,1]$. 
Assume that $(x_n)_{n\geq 1}$ is the Halpern iteration starting with $x\in C$. Then
\be
\item\label{lemma-Halpern-1}  For all $n\geq 1$,
\bua
\|Tx_n\|&\leq &\|x_n\|+\|x\|+\|Tx\|,\\
\|Tx_n-x_n\|&\leq & \|x_{n+1}-x_n\|+\lambda_{n+1}\|x-Tx_n\|,\\
\|x_{n+1}-x_n\| &\leq & (1-\lambda_{n+1})\|x_n-x_{n-1}\|+|\lambda_{n+1}-\lambda_n|\cdot\|x-Tx_{n-1}\|.
\eua
\item \label{lemma-Halpern-2} If $(x_n)$ is bounded, then  $(Tx_n)$ is also bounded. Moreover, if $M\geq  \|x_n\|, \|Tx_n\|$ for all $n\geq 1$, then
\bua
\|Tx_n-x_n\|&\leq &\|x_{n+1}-x_n\| +2M\lambda_{n+1},\label{ineq-Txn-xn}\\
\|x_{n+1}-x_n\| &\leq & (1-\lambda_{n+1})\|x_n-x_{n-1}\|+2M|\lambda_{n+1}-\lambda_n| \label{ineq-xn+1-xn}
\eua
for all $n\geq 1$.
\ee 
\end{lemma}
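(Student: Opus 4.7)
The plan is to prove the three inequalities of part (\ref{lemma-Halpern-1}) by direct algebraic manipulation of the defining recurrence $x_{n+1}=\lambda_{n+1}x+(1-\lambda_{n+1})Tx_n$, combined only with the triangle inequality and nonexpansiveness of $T$, and then to deduce part (\ref{lemma-Halpern-2}) as an immediate consequence once the bound $M$ is in place.

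For the first inequality, I would apply nonexpansiveness to compare $Tx_n$ with $Tx$: from $\|Tx_n-Tx\|\le\|x_n-x\|$ and the triangle inequality,
\[
\|Tx_n\|\le \|Tx_n-Tx\|+\|Tx\|\le \|x_n-x\|+\|Tx\|\le \|x_n\|+\|x\|+\|Tx\|.
\]
For the second inequality, I would rewrite the recurrence as
\[
x_{n+1}-x_n \;=\; (Tx_n-x_n)\;-\;\lambda_{n+1}(Tx_n-x),
\]
so that $Tx_n-x_n=(x_{n+1}-x_n)+\lambda_{n+1}(Tx_n-x)$, and then take norms.

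For the third inequality, which is the bookkeeping-heavy step and the main place one could slip, I would write both recurrences
\[
x_{n+1}=\lambda_{n+1}x+(1-\lambda_{n+1})Tx_n,\qquad x_n=\lambda_n x+(1-\lambda_n)Tx_{n-1},
\]
subtract, and regroup by adding and subtracting $(1-\lambda_{n+1})Tx_{n-1}$ to arrive at the identity
\[
x_{n+1}-x_n \;=\; (\lambda_{n+1}-\lambda_n)(x-Tx_{n-1})\;+\;(1-\lambda_{n+1})(Tx_n-Tx_{n-1}).
\]
Taking norms, using $\|Tx_n-Tx_{n-1}\|\le\|x_n-x_{n-1}\|$ by nonexpansiveness, and bounding the scalar coefficients gives the stated estimate.

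For part (\ref{lemma-Halpern-2}), the boundedness of $(Tx_n)$ follows directly from the first inequality of (\ref{lemma-Halpern-1}), since boundedness of $(x_n)$ makes the right-hand side bounded. Given $M$ with $M\ge \|x_n\|,\|Tx_n\|$ for all $n$ (and hence $M\ge \|x\|$ via $x_0=x$), the triangle inequality yields $\|x-Tx_n\|\le \|x\|+\|Tx_n\|\le 2M$ and similarly $\|x-Tx_{n-1}\|\le 2M$. Substituting these two bounds into the second and third inequalities of part (\ref{lemma-Halpern-1}) yields the two displayed inequalities. The main obstacle is purely bookkeeping in the third identity; everything else is a straightforward application of nonexpansiveness and the triangle inequality.
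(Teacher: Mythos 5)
Your proof is correct and follows essentially the same route as the paper's: the identical nonexpansiveness-plus-triangle-inequality argument for the first estimate, the same algebraic rearrangements of the recurrence $x_{n+1}=\lambda_{n+1}x+(1-\lambda_{n+1})Tx_n$ for the second and third identities, and part (2) obtained by substituting the bound $\|x-Tx_n\|\le 2M$. The only remark worth making is that your step $\|x-Tx_n\|\le\|x\|+\|Tx_n\|\le 2M$ needs $\|x\|\le M$, i.e.\ the bound must also cover $x_0=x$ even though the lemma's hypothesis is phrased for $n\ge 1$; the paper glosses over this by calling part (2) ``immediate,'' and in the application to the main theorem it is harmless since there $M\ge\|x_n\|+\|x\|+\|Tx\|$.
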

\begin{proof}
\be
\item
\bua
\|Tx_n\|&\leq& \|Tx_n-Tx\|+\|Tx\|\leq \|x_n-x\|+\|Tx\|\leq \|x_n\|+\|x\|+\|Tx\|\\
\|Tx_n-x_n\|&=& \|(\lambda_{n+1}x+(1-\lambda_{n+1})Tx_n-\lambda_{n+1}(x-Tx_n))-x_n\|\\
&=&\|x_{n+1}-x_n-\lambda_{n+1}(x-Tx_n)\|\leq \|x_{n+1}-x_n\|+\lambda_{n+1}\|x-Tx_n\|
\eua
\bua
\|x_{n+1}-x_n\| & = &  \|\lambda_{n+1}x+(1-\lambda_{n+1})Tx_n - \lambda_{n}x-(1-\lambda_{n})Tx_{n-1}\|\\
 & = &  \|(\lambda_{n+1}-\lambda_n)x+(1-\lambda_{n+1})(Tx_n-Tx_{n-1})+
(\lambda_n-\lambda_{n+1})Tx_{n-1}\|\\
 & = & \|(\lambda_{n+1}-\lambda_n)(x-Tx_{n-1})+(1-\lambda_{n+1})(Tx_n-Tx_{n-1})\|\\
&\leq & |\lambda_{n+1}-\lambda_n|\cdot \|x-Tx_{n-1}\|+(1-\lambda_{n+1})\|x_n-x_{n-1}\|,\\
&& \text{since~}  T \text{~is nonexpansive}.
\eua
\item is an immediate consequence of \ref{lemma-Halpern-1}.
\ee
\end{proof}

\begin{lemma}\label{lemma-an-1}
Let $(\lambda_n)_{n\ge 1}$ be a sequence in $[0,1]$ and $(a_n)_{n\geq 1},(b_n)_{n\geq 1}$ be sequences in $\R_+$  such that $\ds \sum_{n=1}^\infty b_n$ is convergent and 
\[
a_{n+1}\leq (1-\lambda_{n+1}) a_n + b_n \quad \text{for all~} n\in\N^*.
\]
Then
\be
\item for all $m,n\in\N^*$,
\beq
a_{n+m}\leq \left[\prod_{j=n}^{n+m-1}(1-\lambda_{j+1})\right]a_n+\sum_{j=n}^{n+m-1}b_j\label{ineq-prod-a-n}
\eeq
\item $(a_n)$ is bounded.
\ee
\end{lemma}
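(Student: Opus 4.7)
My plan is to prove part (i) by induction on $m$ with $n$ fixed, and then derive part (ii) as an immediate corollary by taking $n=1$ and using $\prod_j(1-\lambda_{j+1})\le 1$ together with the convergence of $\sum b_n$.

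For the base case $m=1$ of part (i), the asserted inequality reads $a_{n+1}\le(1-\lambda_{n+1})a_n+b_n$, which is exactly the recursive hypothesis. For the inductive step, assuming the inequality holds for $m$, I apply the hypothesis at index $n+m$ to get $a_{n+m+1}\le(1-\lambda_{n+m+1})a_{n+m}+b_{n+m}$, substitute the inductive bound for $a_{n+m}$, and then absorb the factor $(1-\lambda_{n+m+1})\le 1$ in front of the $b_j$-sum. This extends the product range from $j=n,\ldots,n+m-1$ to $j=n,\ldots,n+m$ and extends the sum range to include $b_{n+m}$, which is exactly the desired inequality for $m+1$.

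For part (ii), let $B:=\sum_{j=1}^\infty b_j<\infty$. Since each factor satisfies $0\le 1-\lambda_{j+1}\le 1$, the product $\prod_{j=1}^{m}(1-\lambda_{j+1})$ is bounded by $1$. Taking $n=1$ in part (i) yields $a_{1+m}\le a_1+\sum_{j=1}^{m}b_j\le a_1+B$ for every $m\ge 1$, hence $a_n\le a_1+B$ for all $n\ge 1$.

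There is essentially no obstacle here: the main calculation is the inductive step, and its only subtlety is observing that $(1-\lambda_{n+m+1})$ multiplying the partial sum can simply be dropped because the $b_j$ are nonnegative. Everything else is bookkeeping with the indices on the product and on the sum.
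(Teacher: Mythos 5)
Your proof is correct and follows the same route as the paper: induction on $m$ for the product inequality (the paper simply calls this an easy induction), and then part (ii) by specializing to $n=1$, bounding the product by $1$ and the partial sums of $b_j$ by the convergent series. Nothing to object to.
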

\begin{proof}
\be
\item By an easy induction on $m$.
\item Applying (\ref{ineq-prod-a-n}) with $n:=1$, we get that for all $m\geq 1$,
\[0\leq a_{m+1}\leq \left[\prod_{j=1}^{m}(1-\lambda_{j+1})\right]a_1+\sum_{j=1}^{m}b_j\leq a_1+\sum_{j=1}^m b_j\leq a_1+\sum_{j=1}^\infty b_j< \infty,\]
since $\ds\sum_{j=1}^\infty b_j <\infty$. Thus, $(a_n)$ is bounded.
\qed
\ee
\end{proof}

The following lemma is a quantitative version of \cite[Lemma 2]{Liu}. 

\begin{lemma}\label{quant-liu}$\,$\\
Let $(\lambda_n)_{n\ge 1}$ be a sequence in $[0,1]$ and $(a_n)_{n\geq 1},(b_n)_{n\geq 1}$ be sequences in $\R_+$  such that for all $\ds n\in\N^*$,
\beq
a_{n+1}\leq (1-\lambda_{n+1}) a_n + b_n.
\eeq
Assume moreover that $\ds \sum_{n=1}^\infty \lambda_n$ is divergent,$\,\ds \sum_{n=1}^\infty b_n$ is convergent and let $\delta:\N^*\to\N^*$ be a rate of divergence of $\ds \sum_{n=1}^\infty \lambda_n$, $\gamma:(0,\infty)\to\N^*$ be a Cauchy modulus of $(s_m)_{m\geq 1}$, where $s_m:=\ds\sum_{i=1}^mb_i$. 

Then $\ds\lim_{n\to\infty} a_n =0$ and moreover 
\beq
 \forall \eps\in(0,2) \forall n\ge
h(\gamma,\delta,D,\eps)\ \big(a_n < \eps\big), \label{conclusion-quant-liu}
\eeq
where 
\[
\begin{array}{l}
\ds h(\gamma,\delta,D,\eps)=\delta\left(\gamma\left(\frac\eps 2\right)+1+\left\lceil\ln\left(\frac{2D}\eps\right)\right\rceil\right),\\[0.2cm]
D\in\N^* \text{~ is an upper bound on~} (a_n).
\end{array}
\]
\end{lemma}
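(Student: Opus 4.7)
The plan is to feed the one-step recurrence into the telescoping estimate provided by the first part of Lemma \ref{lemma-an-1} and then split the resulting bound into two pieces, each of which we make smaller than $\eps/2$ by choosing the base index large enough. Concretely, set $N_0:=\gamma(\eps/2)$ and $K:=\lceil\ln(2D/\eps)\rceil$. For any $n\geq h(\gamma,\delta,D,\eps)=\delta(N_0+1+K)$, I would apply Lemma \ref{lemma-an-1}(1) with base index $N_0+1$ and length $m:=n-N_0-1$ to obtain
\[
a_n\leq \left[\prod_{j=N_0+1}^{N_0+m}(1-\lambda_{j+1})\right]a_{N_0+1}+\sum_{j=N_0+1}^{N_0+m}b_j.
\]

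The second summand equals $s_{N_0+m}-s_{N_0}<\eps/2$ by the defining property of the Cauchy modulus $\gamma$ at argument $\eps/2$. For the product, I would use the elementary inequality $1-x\leq e^{-x}$ for $x\in[0,1]$ to reduce matters to showing
\[
\sum_{j=N_0+2}^{N_0+m+1}\lambda_j\ \geq\ \ln(2D/\eps),
\]
which then forces the product to be at most $\eps/(2D)$; together with $a_{N_0+1}\leq D$ this bounds the first summand by $\eps/2$ and yields $a_n<\eps$.

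The main (though small) obstacle is that the rate of divergence $\delta$ only controls head partial sums $\sum_{j=1}^{N}\lambda_j$, not tail sums. The trick is that $\lambda_j\leq 1$ gives $\sum_{j=1}^{N_0+1}\lambda_j\leq N_0+1$, while the hypothesis $n\geq\delta(N_0+1+K)$, together with the definition of a rate of divergence, yields $\sum_{j=1}^{N_0+m+1}\lambda_j\geq N_0+1+K$; subtracting furnishes the desired tail lower bound of $K\geq \ln(2D/\eps)$. As a final bookkeeping check, one verifies that $m\geq 1$ (since $\delta(k)\geq k$ and $K\geq 1$) and that $K\geq 1$ (which holds because $D\in\N^*$ and $\eps<2$ imply $2D/\eps>1$), so the telescoping product is non-empty and the logarithmic term is nontrivial.
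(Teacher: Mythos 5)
Your proposal is correct and follows essentially the same route as the paper's proof: telescoping via Lemma \ref{lemma-an-1}(1) from the base index $\gamma(\eps/2)+1$, bounding the tail $\sum b_j$ by the Cauchy modulus, using $1-x\le e^{-x}$, and converting the head-sum information from $\delta$ into a tail lower bound via $\lambda_j\le 1$. The bookkeeping details ($\delta(k)\ge k$, $\ln(2D/\eps)>0$) also match the paper's argument.
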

\begin{proof}
By Lemma \ref{lemma-an-1}, $(a_n)$ is bounded, so there exists $D\in \N^*$ such that $a_n\leq D$ for all $n\in\N^*$.
Let $\eps\in(0,2)$ and define 
\beq
N:=\gamma\left(\frac\eps 2\right)+1. \label{def-N}
\eeq
Applying (\ref{ineq-prod-a-n}) with $n:=N$, it follows that for all $m\in\N^*$
\bua
a_{N+m}&\leq & \left[\prod_{j=N}^{N+m-1}(1-\lambda_{j+1})\right]a_N+\sum_{j=N}^{N+m-1}b_j\\
&\leq & \exp\left(-\sum_{j=N}^{N+m-1}\lambda_{j+1}\right)a_N+\sum_{j=N}^{N+m-1}b_j,\\
&&\,\, \text{since~}1-x\leq \exp(-x) \text{~for all~}x\in[0,\infty)\\
&=&\exp\left(-\sum_{j=N}^{N+m-1}\lambda_{j+1}\right)a_N+\left(s_{\gamma\left(\frac\eps 2\right)+m}-s_{\gamma\left(\frac\eps 2\right)}\right)\\
&< & D\exp\left(-\sum_{j=N}^{N+m-1}\lambda_{j+1}\right)a_N+\frac\eps 2,\\
&& \text{since~} \gamma \text{~is a Cauchy modulus of~} (s_m).
\eua
For simplicity, let us denote $\ds d_m:=D\exp\left(-\sum_{j=N}^{N+m-1}\lambda_{j+1}\right)$.
We have got then that for all $m\in\N^*$,
\beq
a_{N+m} < d_m + \frac\eps 2. \label{ineq-aN+m}
\eeq
Let us note that 
\bua
d_m \leq \frac\eps 2 & \Lra & \exp\left(-\sum_{j=N}^{N+m-1}\lambda_{j+1}\right)\leq \frac\eps{2D} \Lra  -\sum_{j=N}^{N+m-1}\lambda_{j+1}\leq \ln\left(\frac\eps{2D}\right)\\
&\Lra & \sum_{j=N}^{N+m-1}\lambda_{j+1}\geq -\ln\left(\frac\eps{2D}\right)=\ln\left(\frac{2D}\eps\right)\Lra \sum_{i=N+1}^{N+m}\lambda_{i}\geq \ln\left(\frac{2D}\eps\right)\\
&\Lra &\sum_{i=1}^{N+m}\lambda_{i}\geq \sum_{i=1}^{N}\lambda_i+\ln\left(\frac{2D}\eps\right).
\eua
Let
\beq
M:= \delta\left(N+\left\lceil\ln\left(\frac{2D}\eps\right)\right\rceil\right)-N.
\eeq
Since $\delta$ is a rate of divergence of $\ds\sum_{n=1}^\infty \lambda_n$ and $\lambda_n\leq 1$, it is obvious that $\delta(n)\geq n$ for all $n\in\N^*$. Using also the fact that $\ds\frac{2D}{\eps}> D >1$, so $\ds\ln\left(\frac{2D}\eps\right)>0$, it is easy to see that $M\in \N^*$. Moreover, for $m\ge M$, we get that
\bua
 \sum_{i=1}^{N+m}\lambda_{i}\geq \sum_{i=1}^{N+M}\lambda_{i}\geq N+\left\lceil\ln\left(\frac{2D}\eps\right)\right\rceil\geq \sum_{i=1}^{N}\lambda_i+\ln\left(\frac{2D}\eps\right).
\eua
Hence, $\ds d_m \leq \frac\eps 2$ for all $m\geq M$. Combining this with (\ref{ineq-aN+m}), we get that for all $m\geq M$, $a_{N+m} <\eps$, that is 
\beq
a_{N+M+n}<\eps.
\eeq
for all $n\in\N$.
Define
\beq
h(\gamma,\delta,D,\eps):=N+M= \delta\left(N+\left\lceil\ln\left(\frac{2D}\eps\right)\right\rceil\right)
\eeq
Then (\ref{conclusion-quant-liu}) follows. Thus, $\lim a_n=0$ and $h(\gamma,\delta,D,\eps)$ is a rate of convergence of $(a_n)$ towards $0$.
\end{proof}

\section{Proof of Theorem \ref{main-thm}}

By Lemma \ref{lemma-Halpern}, we get that $M\geq \|x_n\|, \|Tx_n\|$ for all $n\geq 1$ and 
\beq
\|x_{n+1}-x_n \|\leq  (1-\lambda_{n+1})\|x_n-x_{n-1}\|+2M\cdot|\lambda_{n+1}-\lambda_n|.\label{ineq-1}
\eeq
Let us consider the sequences
\[a_n:=\|x_n-x_{n-1}\|,  \quad b_n:=2M|\lambda_{n+1}-\lambda_n|\]
and let  $D:=2M$. Then $D$ is a bound on $(a_n)$ and, by (\ref{ineq-1}), for all $n\geq 1$,
\[a_{n+1}\leq (1-\lambda_{n+1})a_n+b_n.\]
Moreover, $\ds\sum_{n=1}^\infty \lambda_n$ is divergent with rate of divergence $\theta$ and if we define
\[\gamma:(0,\infty)\to \N^*, \quad \gamma(\eps):=\beta\left(\frac\eps{2M}\right),\]
we get that for all $n\in \N^*$,
\bua
\sum_{i=1}^{\gamma(\eps)+n}b_i-\sum_{i=1}^{\gamma(\eps)}b_i&=& 2M\left(\sum_{i=1}^{\gamma(\eps)+n}|\lambda_{i+1}-\lambda_i|-\sum_{i=1}^{\gamma(\eps)}|\lambda_{i+1}-\lambda_i|\right)\\
&=& 2M\left(s_{\beta\left(\frac\eps{2M}\right)+n}-s_{\beta\left(\frac\eps{2M}\right)}\right)\\
&<& 2M\cdot\frac\eps{2M}=\eps,
\eua
so $\ds \sum_{n=1}^\infty b_n$ is convergent and $\gamma$ is a Cauchy modulus of $\left(\ds\sum_{i=1}^nb_i\right)$.

Thus, the hypothesis of Lemma \ref{quant-liu} are satisfied, so we can apply it to get that for all $\eps\in(0,2)$ and for all $n\geq h_1(\beta,\theta,M,\eps)$
\beq
\|x_n-x_{n-1}\|<\frac\eps 2,\label{final-1}
\eeq
where 
\bua
h_1(\beta,\theta,M,\eps)&:=& \theta\left(\beta\left(\frac\eps {8M}\right)+1+\left\lceil\ln\left(\frac{8M}\eps\right)\right\rceil\right).
\eua
By Lemma \ref{lemma-Halpern}.\ref{lemma-Halpern-2}, for all $n\geq 2$,
\beq
\|x_{n-1}-Tx_{n-1}\| \leq  \|x_n-x_{n-1}\|+2M\lambda_n. \label{final-0}
\eeq
Let $\ds h_2(\alpha,M,\eps):=\alpha\left(\frac\eps {4M}\right)$. Then, using the fact that $\alpha$ is a rate of convergence of $(\lambda_n)$ towards $0$, we get that for all $n\geq h_2(\alpha,M,\eps)$
\beq
2M\lambda_{n}< 2M\frac\eps {4M}=\frac\eps 2.\label{final-2}
\eeq
Combining (\ref{final-1}), (\ref{final-0}) and (\ref{final-2}), it follows that 
\[\|x_{n-1}-Tx_{n-1}\|<\eps\]
for all $n\geq \max\{h_1(\beta,\theta,M,\eps),h_2(\alpha,M,\eps)\}$, so the conclusion of the theorem follows with $\Phi$ defined by (\ref{def-Phi-main-thm}).
$\hfill\qed$

\end{document}